\documentclass{siamltex}
\usepackage[hidelinks,breaklinks]{hyperref}
\usepackage{graphicx}
\usepackage{amsmath}
\usepackage{amssymb}
\usepackage{accents}

\renewcommand{\rank}{\mathop{\mathrm{rank}}}

\newlength{\dhatheight}

\newlength{\dtildeheight}

\title{Accurate principal component analysis via\\a few iterations
of alternating least squares}
\author{Arthur Szlam, Andrew Tulloch, and Mark Tygert}

\begin{document}

\maketitle

\begin{abstract}
A few iterations of alternating least squares with a random starting point
provably suffice to produce nearly optimal spectral- and Frobenius-norm
accuracies of low-rank approximations to a matrix;
iterating to convergence is unnecessary.
Thus, software implementing alternating least squares can be retrofitted
via appropriate setting of parameters to calculate nearly optimally accurate
low-rank approximations highly efficiently, with no need for convergence.
\end{abstract}

\begin{keywords}
low-rank approximation, principal component analysis,
alternating least squares, alternating minimization, randomized algorithm
\end{keywords}

\section{Introduction}

Low-rank approximations are popular throughout the sciences and engineering,
often in the form of principal component analysis, and converting
any low-rank approximation to a singular value decomposition
or principal component analysis is trivial and efficient, as detailed,
for example, by~\cite{halko-martinsson-tropp}, \cite{gu}, or~\cite{woodruff}.
To calculate an accurate approximation to a matrix $A$,
we consider the low-rank approximations
\begin{equation}
A \approx S_0 T_0, \quad A \approx S_1 T_1, \quad A \approx S_2 T_2,
\quad \dots,
\end{equation}
with $S_i$ being a tall and skinny matrix
and $T_i$ being a short and fat matrix,
produced via iterations starting from $S_0$
--- iterations called ``alternating least squares''
by~\cite{young-takane-leeuw} (among others):
for each $i = 0$,~$1$,~$2$, \dots,
having $S_i$ already, we obtain $T_i$ minimizing the norm
\begin{equation}
\label{right}
\|S_i T_i - A\|,
\end{equation}
then, having $T_i$ already, we obtain $S_{i+1}$ minimizing the norm
\begin{equation}
\label{left}
\|S_{i+1} T_i - A\|,
\end{equation}
where these norms denote the spectral or Frobenius norms
(the Frobenius norm of a matrix is the square root of the sum of the squares
of the absolute values of the entries of the matrix);
specifically, we use the minimizers
\begin{equation}
\label{rightup}
T_i = S_i^{(-1)} A
\end{equation}
and
\begin{equation}
\label{leftup}
S_{i+1} = A T_i^{(-1)}
\end{equation}
for $i = 0$,~$1$,~$2$, \dots, where
\begin{equation}
\label{pseudos}
S_i^{(-1)} = (S_i^* S_i)^{-1} S_i^*
\end{equation}
and
\begin{equation}
\label{pseudot}
T_i^{(-1)} = T_i^* (T_i T_i^*)^{-1}
\end{equation}
(see Section~\ref{notation} below for precise definitions,
particularly for the inverse and pseudo\-inverse).
The appendix reviews the well-known fact that these minimize
both the spectral and Frobenius norms.

Following~\cite{halko-martinsson-tropp}, we demonstrate that
the approximations attain high accuracy after just a few of these iterations.
Specifically, the remainder of the present paper has the following structure:
Section~\ref{notation} sets notational conventions used throughout the paper.
Via mathematical analysis, Section~\ref{analysis} proves the high accuracy.
Section~\ref{examples} illustrates the high accuracy via numerical examples
with a Matlab prototype available at \url{http://tygert.com/software.html}

The accompanying prototype comes complete with a comprehensive collection
of tests, but is all in Matlab.
The present paper provides a fully rigorous basis
for more general software packages implementing alternating least squares
to be retrofitted via appropriate setting of parameters
to calculate nearly optimally accurate low-rank approximations,
with no need to wait for convergence.

\section{Notation}
\label{notation}

This section sets our notational conventions.
For any full-rank square matrix $A$, we use $A^{-1}$ to denote
the inverse of $A$.
{\it For any rank-deficient square matrix $A$, we use $A^{-1}$ to denote
the pseudoinverse of $A$}; the pseudoinverse of $A$ is the matrix representing
the inverse of $A$ with its domain restricted to the row space of $A$
(plus the identically zero map restricted to the null space of $A$).
Needless to say, for a full-rank square matrix,
the pseudoinverse is the same as the inverse.

For any matrix $A$, we denote by $A^*$ the adjoint (that is,
the conjugate transpose) of $A$,
so that the spectral norm of $A$ is given by the action of $A$ on vectors via
\begin{equation}
\label{specdef}
\|A\|_2 = \sqrt{\max_{v\;:\;v^*v=1} v^* A^* A v},
\end{equation}
and the Frobenius norm $\|A\|_F$ of $A$ is the square root of the sum
of the squares of the absolute values of the entries of $A$.
The spectral and Frobenius norms of a vector viewed as a matrix
with a single row or column are the same, and are also known as
the Euclidean norm of the vector. A definition equivalent to~(\ref{specdef}) is
\begin{equation}
\|A\|_2 = \max_{v\;:\;\|v\|_2=1} \|Av\|_2,
\end{equation}
where the norms of the vectors are the Euclidean norms.

\section{Analysis of accuracy}
\label{analysis}

This section demonstrates that the procedure --- alternating least squares ---
described in the introduction produces a highly accurate approximation
$S_i T_i$ to the given matrix $A$ even for a small number $i$ of iterations,
provided that $S_0$ is one of the random matrices
used by~\cite{halko-martinsson-tropp}
(for example, the entries of $S_0$ can be independent
and identically distributed standard normal variates).
The demonstration is simply a reduction to the proof
of accuracy for similar algorithms by~\cite{halko-martinsson-tropp};
we leave the brunt of the proof (together with a discussion
of the intuitions behind the proof) to~\cite{halko-martinsson-tropp}.
We begin by proving several lemmas.

The following lemma provides an explicit expression
for $S_i$ from the iterations in~(\ref{rightup}) and~(\ref{leftup}).
\begin{lemma}
Given matrices $A$ and $S_0$ for the iterations in~(\ref{rightup})
and~(\ref{leftup}), the matrix $S_i$ coming from those iterations
can be expressed as
\begin{equation}
\label{unrolled}
S_i = (A A^*)^i S_0 B_0 B_1 B_2 \cdots B_{i-1}
\end{equation}
for $i = 1$,~$2$, $3$, \dots, where we define
\begin{equation}
\label{auxiliary}
B_i = (S_i^* A A^* S_i)^{-1} S_i^* S_i.
\end{equation}
\end{lemma}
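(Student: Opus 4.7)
The plan is a direct unrolling of the recursion defining $S_{i+1}$ in terms of $S_i$, followed by induction on $i$. The key step is to simplify $S_{i+1} = A T_i^{(-1)}$ using~(\ref{rightup}) and~(\ref{pseudot}) to show that the one-step update has the clean form
\begin{equation}
S_{i+1} = A A^* S_i B_i,
\end{equation}
which is precisely what is needed for the claimed product expression to telescope.

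Concretely, I would first substitute~(\ref{rightup}) and~(\ref{pseudos}) into $T_i$ to write $T_i = (S_i^* S_i)^{-1} S_i^* A$, take the adjoint to get $T_i^* = A^* S_i (S_i^* S_i)^{-1}$, and then form
\begin{equation}
T_i T_i^* = (S_i^* S_i)^{-1} \, S_i^* A A^* S_i \, (S_i^* S_i)^{-1}.
\end{equation}
Inverting this and multiplying by $T_i^*$ on the left, the outer factors of $(S_i^* S_i)^{-1}$ cancel and I obtain
\begin{equation}
T_i^{(-1)} = T_i^* (T_i T_i^*)^{-1} = A^* S_i \, (S_i^* A A^* S_i)^{-1} \, (S_i^* S_i) = A^* S_i B_i,
\end{equation}
using the definition~(\ref{auxiliary}) of $B_i$. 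Hence $S_{i+1} = A T_i^{(-1)} = A A^* S_i B_i$.

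With this one-step identity in hand, a routine induction on $i$ delivers~(\ref{unrolled}): the base case $i=1$ is just $S_1 = AA^* S_0 B_0$, and the inductive step $S_{i+1} = AA^* S_i B_i = AA^*\bigl[(AA^*)^i S_0 B_0 \cdots B_{i-1}\bigr] B_i = (AA^*)^{i+1} S_0 B_0 \cdots B_i$ is immediate.

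The only real subtlety will be justifying the algebraic manipulations of $(T_i T_i^*)^{-1}$ when the relevant matrices fail to have full rank, since by the convention in Section~\ref{notation} the symbol $(\,\cdot\,)^{-1}$ may denote a pseudoinverse. I would dispatch this either by assuming generic full rank of $S_i$ and $S_i^* A$ (which holds almost surely for the random $S_0$ considered later, propagated through the iteration), or by checking that the cancellations of $(S_i^* S_i)^{-1}(S_i^* S_i)$ reduce to projections onto the relevant column and row spaces in a way consistent with the pseudoinverse convention. This bookkeeping is the only genuine obstacle; everything else is direct substitution.
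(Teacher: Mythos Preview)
Your proposal is correct and follows essentially the same route as the paper: derive the one-step recurrence $S_{i+1} = A A^* S_i B_i$ by substituting~(\ref{rightup})--(\ref{pseudot}) into the definition of $S_{i+1}$, then iterate. The paper compresses your explicit algebra for $T_i^{(-1)}$ into a single line and similarly leaves the pseudoinverse bookkeeping implicit, but the argument is the same.
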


\begin{proof}
Combining~(\ref{rightup})--(\ref{pseudot}) and~(\ref{auxiliary}) yields
\begin{equation}
\label{recur}
S_{i+1} = A A^* S_i (S_i^* A A^* S_i)^{-1} S_i^* S_i = A A^* S_i B_i
\end{equation}
for $i = 0$,~$1$,~$2$, \dots.
Iterating the recurrence in~(\ref{recur}) yields~(\ref{unrolled}).
\end{proof}

The following lemma follows straightforwardly
from using singular value decompositions.
\begin{lemma}
\label{lemming}
Suppose that $A$, $S$, and $T$ are matrices such that
\begin{equation}
\label{solution}
T = S^{(-1)} A,
\end{equation}
where
\begin{equation}
\label{pseudo}
S^{(-1)} = (S^* S)^{-1} S^*.
\end{equation}

Then, the ranks of $S^* A$, $T$, and $A T^*$ are all equal.
\end{lemma}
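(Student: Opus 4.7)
The plan is to use the thin singular value decomposition of $S$ to rewrite $T$, $S^*A$, and $AT^*$ in forms whose ranks are all manifestly equal to a single intermediate quantity.

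First I would take the thin SVD $S = U_S \Sigma_S V_S^*$, where $U_S$ and $V_S$ have orthonormal columns and $\Sigma_S$ is a square invertible diagonal matrix of size $\rank(S)$. A short calculation from~(\ref{pseudo}) then gives $S^{(-1)} = V_S \Sigma_S^{-1} U_S^*$, so
\[
T = V_S \Sigma_S^{-1} U_S^* A
\quad \text{and} \quad
S^* A = V_S \Sigma_S U_S^* A.
\]
Both $V_S \Sigma_S^{-1}$ and $V_S \Sigma_S$ have full column rank, so left-multiplication by either of them preserves the rank of $U_S^* A$. This gives $\rank(T) = \rank(S^*A) = \rank(U_S^* A)$, which already settles two of the three ranks.

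For the third rank, I would take adjoints to obtain $AT^* = AA^* U_S \Sigma_S^{-1} V_S^*$. Right-multiplication by $\Sigma_S^{-1} V_S^*$, which has full row rank, preserves rank, so $\rank(AT^*) = \rank(AA^* U_S)$. The only non-tautological step is the general identity $\rank(AA^*M) = \rank(A^*M)$, which holds because $AA^*Mv = 0$ implies $\|A^*Mv\|_2^2 = v^* M^* AA^* Mv = 0$, so the maps $v \mapsto A^* Mv$ and $v \mapsto AA^* Mv$ have the same kernel. Applying this with $M = U_S$ yields $\rank(AT^*) = \rank(A^* U_S) = \rank(U_S^* A)$, matching the common value from the previous paragraph.

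I do not expect any real obstacle: once the SVD of $S$ is in hand, the argument is essentially rank bookkeeping. The one point requiring a little care is the handling of $(S^*S)^{-1}$ in the rank-deficient case, where the convention of Section~\ref{notation} interprets it as the pseudoinverse; using the thin SVD sidesteps this cleanly, because $\Sigma_S$ is by construction an invertible matrix of size $\rank(S)$ and the identity $S^{(-1)} = V_S \Sigma_S^{-1} U_S^*$ then holds literally.
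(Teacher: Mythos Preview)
Your argument is correct, and the overall strategy---reduce everything to the rank of $U_S^* A$ via the SVD of $S$---matches the paper's. The execution differs in two respects worth noting. First, you take the \emph{thin} SVD of $S$, so $\Sigma_S$ is genuinely invertible and the pseudoinverse convention of Section~\ref{notation} never has to be invoked explicitly; the paper instead uses the \emph{full} SVDs of both $S$ and $A$, writes the three matrices as $V_S \Sigma_S^{*} W \Sigma_A V_A^*$, $V_S \Sigma_S^{(-1)} W \Sigma_A V_A^*$, and $V_S \Sigma_S^{(-1)} W \Sigma_A^{(2)} U_A^*$ with $W = U_S^* U_A$, and then argues that passing between these amounts to rescaling rows or columns by nonzero scalars. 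Second, for the equality $\rank(AT^*) = \rank(U_S^* A)$ you use the null-space identity $\ker(AA^*M) = \ker(A^*M)$, whereas the paper gets the same conclusion by rescaling columns with $\Sigma_A$, which forces a separate case analysis according to whether $A$ is square. Your route is a little shorter and avoids that case split; the paper's route is more symmetric in its treatment of $S$ and $A$.
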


\begin{proof}
Combining~(\ref{solution}) and~(\ref{pseudo}) yields that
\begin{equation}
\label{simpler}
T = (S^* S)^{-1} S^* A
\end{equation}
and
\begin{equation}
\label{simpler2}
T A^* = (S^* S)^{-1} S^* A A^*.
\end{equation}
We form the full singular value decompositions
\begin{equation}
\label{ASVD}
A = U_A \Sigma_A V_A^*
\end{equation}
and
\begin{equation}
\label{SSVD}
S = U_S \Sigma_S V_S^*,
\end{equation}
where $U_A$, $V_A$, $U_S$, and $V_S$ are unitary,
and all entries of $\Sigma_A$ and $\Sigma_S$ are nonnegative
and are zero off the main diagonals.
Combining~(\ref{simpler})--(\ref{SSVD}) yields
\begin{equation}
\label{comp1}
S^* A = V_S \Sigma_S^* U_S^* U_A \Sigma_A V_A^*,
\end{equation}
\begin{equation}
\label{comp2}
T = V_S \Sigma_S^{(-1)} U_S^* U_A \Sigma_A V_A^*,
\end{equation}
and
\begin{equation}
\label{comp3}
T A^* = V_S \Sigma_S^{(-1)} U_S^* U_A \Sigma_A^{(2)} U_A^*,
\end{equation}
where $\Sigma_S^{(-1)}$ is the same as $\Sigma_S^*$,
but replacing its nonzero diagonal entries with their reciprocals,
and where $\Sigma_A^{(2)} = \Sigma_A \Sigma_A^*$ is the square diagonal matrix
with the squares of the diagonal entries of $\Sigma_A$ on its diagonal.

Combining~(\ref{comp1})--(\ref{comp3}) and the fact that
$U_A$, $V_A$, $U_S$, and $V_S$ are unitary yields
\begin{equation}
\label{simp1}
\rank(S^* A) = \rank(\Sigma_S^* W \Sigma_A),
\end{equation}
\begin{equation}
\label{simp2}
\rank(T) = \rank(\Sigma_S^{(-1)} W \Sigma_A),
\end{equation}
and
\begin{equation}
\label{simp3}
\rank(T A^*) = \rank(\Sigma_S^{(-1)} W \Sigma_A^{(2)}),
\end{equation}
where $W$ is the unitary matrix
\begin{equation}
W = U_S^* U_A.
\end{equation}
The claim stated in the lemma (that the ranks of $S^* A$, $T$, and $A T^*$ are
all equal) then follows from the combination of~(\ref{simp1})--(\ref{simp3})
and the facts that $\Sigma_S^{(-1)} W \Sigma_A$ is the same
as $\Sigma_S^* W \Sigma_A$ with its rows rescaled by nonzero multiples
(so that they have the same row space), and that (assuming $A$ is square)
$\Sigma_S^{(-1)} W \Sigma_A^{(2)}$ is the same
as $\Sigma_S^{(-1)} W \Sigma_A$ with its columns rescaled by nonzero multiples
(so that they have the same column space);
of course, the rank of a matrix is equal to the dimension of its row space
(which is the same as the dimension of its column space).
If $A$ is not square, then either $\Sigma_S^{(-1)} W \Sigma_A^{(2)}$
is the same as $\Sigma_S^{(-1)} W \Sigma_A$ augmented by columns of zeros
and with its columns rescaled by nonzero multiples
or $\Sigma_S^{(-1)} W \Sigma_A$ is the same
as $\Sigma_S^{(-1)} W \Sigma_A^{(2)}$ augmented by columns of zeros
and with its columns rescaled by nonzero multiples
(so that again they have the same column space, which is the same
as the column space of the original, unaugmented $\Sigma_S^{(-1)} W \Sigma_A$
or $\Sigma_S^{(-1)} W \Sigma_A^{(2)}$).
\end{proof}

The adjoint of the preceding lemma is the following.

\begin{corollary}
\label{coring}
Suppose that $A$, $S$, and $T$ are matrices such that
\begin{equation}
S = A T^{(-1)},
\end{equation}
where
\begin{equation}
T^{(-1)} = T^* (T T^*)^{-1}.
\end{equation}

Then, the ranks of $A T^*$, $S$, and $S^* A$ are all equal.
\end{corollary}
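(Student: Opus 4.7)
The plan is to derive the corollary directly from Lemma~\ref{lemming} by applying that lemma to the adjoint matrices, since the hypothesis here is structurally the adjoint of the hypothesis there. Specifically, I would set $A' = A^*$, $S' = T^*$, and $T' = S^*$, and then verify that with these primed matrices the hypothesis of Lemma~\ref{lemming} is satisfied, namely that $T' = (S')^{(-1)} A'$.

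The key auxiliary step is to check that the pseudoinverse commutes with taking adjoints in the appropriate way, i.e.\ that $(T^{(-1)})^* = (T^*)^{(-1)}$. From the definition $T^{(-1)} = T^* (T T^*)^{-1}$, taking adjoints and using that $T T^*$ is Hermitian gives $(T^{(-1)})^* = (T T^*)^{-1} T$. On the other hand, applying the definition in~(\ref{pseudo}) to $T^*$ yields $(T^*)^{(-1)} = ((T^*)^* T^*)^{-1} (T^*)^* = (T T^*)^{-1} T$, so the two expressions agree. With this in hand, the hypothesis $S = A T^{(-1)}$ of the corollary becomes, after taking adjoints, $S^* = (T^{(-1)})^* A^* = (T^*)^{(-1)} A^*$, which is precisely $T' = (S')^{(-1)} A'$.

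Lemma~\ref{lemming} then asserts that $\rank((S')^* A')$, $\rank(T')$, and $\rank(A' (T')^*)$ all coincide. Substituting back gives $\rank(T A^*) = \rank(S^*) = \rank(A^* S)$. Using the elementary fact that a matrix and its adjoint have the same rank, these are equal to $\rank(A T^*)$, $\rank(S)$, and $\rank(S^* A)$, respectively, which is exactly the claim of the corollary.

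The proof is essentially a bookkeeping exercise; the only step that requires any thought at all is the verification that taking adjoints and taking pseudoinverses commute in the sense $(T^{(-1)})^* = (T^*)^{(-1)}$, and even this is a one-line computation from the explicit formula~(\ref{pseudo}). I expect no serious obstacle, so the proof should be short.
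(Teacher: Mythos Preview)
Your proposal is correct and matches the paper's approach: the paper gives no separate proof for the corollary, simply introducing it as ``the adjoint of the preceding lemma,'' which is precisely the reduction you carry out by substituting $A' = A^*$, $S' = T^*$, $T' = S^*$ into Lemma~\ref{lemming}. The verification that $(T^{(-1)})^* = (T^*)^{(-1)}$ is the only detail needed, and your computation of it is fine (noting that the paper's convention makes $(TT^*)^{-1}$ the pseudoinverse when $TT^*$ is singular, which is still Hermitian).
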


The following lemma follows from using both Lemma~\ref{lemming}
and Corollary~\ref{coring}.
\begin{lemma}
\label{maxrank}
Given the matrices $A$, $T_i$, and $S_{i+1}$
from~(\ref{rightup}) and~(\ref{leftup}),
the ranks of $S_0^* A$, $T_0$, $A T_0^*$, $S_1$, $S_1^* A$, $T_1$, $A T_1^*$,
$S_2$, $S_2^* A$, $T_2$, $A T_2^*$, $S_3$, $S_3^* A$, $T_3$, $A T_3^*$, \dots\
are all equal.
\end{lemma}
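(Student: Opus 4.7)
The plan is to chain together Lemma~\ref{lemming} and Corollary~\ref{coring} in alternation, applying each at every iteration index $i$ to link the ranks in consecutive groups of three.

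First, for each $i = 0$,~$1$,~$2$, \dots, I would apply Lemma~\ref{lemming} with $S = S_i$, $T = T_i$, and the given $A$; the hypothesis of that lemma is precisely~(\ref{rightup}) together with~(\ref{pseudos}), so the lemma delivers
\begin{equation}
\rank(S_i^* A) = \rank(T_i) = \rank(A T_i^*).
\end{equation}
Next, I would apply Corollary~\ref{coring} with $T = T_i$, $S = S_{i+1}$, and the given $A$; its hypothesis is exactly~(\ref{leftup}) together with~(\ref{pseudot}), and it yields
\begin{equation}
\rank(A T_i^*) = \rank(S_{i+1}) = \rank(S_{i+1}^* A).
\end{equation}
The shared term $\rank(A T_i^*)$ glues the two triples together, and the shared term $\rank(S_{i+1}^* A)$ then glues this chain into the next application of Lemma~\ref{lemming} for index $i+1$.

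Iterating this alternation from $i = 0$ upward produces a single chain of equalities running through $\rank(S_0^* A)$, $\rank(T_0)$, $\rank(A T_0^*)$, $\rank(S_1)$, $\rank(S_1^* A)$, $\rank(T_1)$, $\rank(A T_1^*)$, $\rank(S_2)$, and so on indefinitely, which is exactly the conclusion of the lemma. There is essentially no technical obstacle here; the whole argument is bookkeeping to verify that the hypotheses of Lemma~\ref{lemming} and Corollary~\ref{coring} are satisfied by the defining equations~(\ref{rightup})--(\ref{pseudot}) of the alternating least squares iterates, after which the chain of equalities writes itself.
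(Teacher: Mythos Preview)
Your proposal is correct and follows exactly the approach the paper takes: induction on $i$, alternately invoking Lemma~\ref{lemming} and Corollary~\ref{coring} to chain together the rank equalities. The paper's own proof is the one-line remark that the result follows by induction using those two results; you have simply spelled out the bookkeeping.
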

\begin{proof}
This lemma follows from induction on $i = 0$,~$1$,~$2$, \dots,
using Lemma~\ref{lemming} and Corollary~\ref{coring}.
\end{proof}

The following theorem follows from~(\ref{unrolled}) and Lemma~\ref{maxrank}.
\begin{theorem}
\label{colspaces}
Given matrices $A$ and $S_0$ for the iterations in~(\ref{rightup})
and~(\ref{leftup}), the column space of the matrix $S_i$
coming from those iterations is the same as the column space
of $(A A^*)^i S_0$, for $i = 1$,~$2$, $3$, \dots.
\end{theorem}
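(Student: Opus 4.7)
The plan is to leverage the explicit formula~(\ref{unrolled}) for the easy containment of column spaces, and then promote that containment to equality by showing the two column spaces have the same dimension, using Lemma~\ref{maxrank} for one of the rank identities.

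First, I would read off from~(\ref{unrolled}) that $S_i$ is $(AA^*)^i S_0$ multiplied on the right by the matrix $B_0 B_1 \cdots B_{i-1}$, so every column of $S_i$ is a linear combination of columns of $(AA^*)^i S_0$. This immediately yields the containment of the column space of $S_i$ in the column space of $(AA^*)^i S_0$, and in particular the inequality $\rank(S_i) \le \rank((AA^*)^i S_0)$.

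Second, I would prove the reverse rank inequality. For $i \ge 1$, factor $(AA^*)^i = A \cdot A^*(AA^*)^{i-1}$ and transpose to obtain $S_0^*(AA^*)^i = (S_0^* A)\, A^*(AA^*)^{i-1}$. Since the rank of a product is at most the rank of either factor, this gives $\rank((AA^*)^i S_0) = \rank(S_0^*(AA^*)^i) \le \rank(S_0^* A)$. Lemma~\ref{maxrank} supplies $\rank(S_0^* A) = \rank(S_i)$, and combining with the previous step yields $\rank(S_i) = \rank((AA^*)^i S_0)$.

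Finally, I would conclude: a containment of finite-dimensional subspaces of equal dimension is an equality, so the column space of $S_i$ coincides with that of $(AA^*)^i S_0$. I do not expect any real obstacle; the only step that is not completely mechanical is manufacturing the factorization $(AA^*)^i = A \cdot A^*(AA^*)^{i-1}$ so that the elementary rank inequality $\rank(BC) \le \rank(B)$ matches up with the rank of $S_0^* A$ delivered by Lemma~\ref{maxrank}.
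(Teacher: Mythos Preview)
Your proposal is correct and follows essentially the same approach as the paper: use~(\ref{unrolled}) for the containment of column spaces, bound $\rank((AA^*)^i S_0)\le\rank(S_0^*A)$ via the factorization $(AA^*)^i S_0=(AA^*)^{i-1}A\cdot A^*S_0$ (the paper phrases this as a row-space containment), and invoke Lemma~\ref{maxrank} to close the rank gap. The only cosmetic difference is that you argue directly by equality of dimensions, whereas the paper frames the final step as a proof by contradiction.
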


\begin{proof}
As seen from~(\ref{unrolled}), the column space of $S_i$ is a subspace
of the column space of $(A A^*)^i S_0$.
Moreover, the row space of $(A A^*)^i S_0$ is a subspace
of the row space of $A^* S_0$, so
\begin{equation}
\label{rankrel1}
\rank((A A^*)^i S_0) \le \rank(A^* S_0).
\end{equation}

As already mentioned, the column space of $S_i$ is a subspace
of the column space of $(A A^*)^i S_0$;
if the subspace were not the whole space, then
\begin{equation}
\label{rankrel2}
\rank(S_i) < \rank((A A^*)^i S_0),
\end{equation}
and then combining~(\ref{rankrel1}) and~(\ref{rankrel2}) would yield
\begin{equation}
\rank(S_i) < \rank(A^* S_0) = \rank(S_0^* A),
\end{equation}
contradicting Lemma~\ref{maxrank}\ \ \dots\
thus, the claim stated in the theorem must be true.
\end{proof}

Finally, calculating $T_i$ minimizing~(\ref{right}) constructs
the best approximation $S_i T_i$ to $A$ such that the column space
of the approximation lies in the column space of $S_i$
--- which is the same as the column space of $(A A^*)^i S_0$,
as Theorem~\ref{colspaces} proves ---
where ``best'' means minimizing the discrepancy in the spectral norm,
which is the same as minimizing the discrepancy in the Frobenius norm,
as reviewed in the appendix.
This produces a highly accurate approximation $S_i T_i$ to $A$
even for a small number $i$ of iterations,
as proven by~\cite{halko-martinsson-tropp},
provided that $S_0$ is one of the random matrices
used by~\cite{halko-martinsson-tropp}
(for example, the entries of $S_0$ can be independent
and identically distributed standard normal variates)
--- iterating until convergence is unnecessary.

\bigskip
\bigskip
\bigskip
\bigskip
\bigskip
\bigskip

\section{Numerical examples}
\label{examples}

This section presents several numerical experiments
on an implementation in Matlab of the algorithm (alternating least squares)
discussed in the introduction. Although the numerical experiments discussed
here are somewhat limited in order to keep the presentation succinct,
the codes together with software extensively testing them are available at
\url{http://tygert.com/software.html}

We consider various values for positive integers $m$ and $n$,
as specified in the captions for Tables~\ref{numex}--\ref{numex3},
and calculate rank-$k$ approximations to the $m \times n$ matrix
\begin{equation}
A = F \Sigma G,
\end{equation}
where $F$ and $G$ are $m \times m$ and $n \times n$
unitary discrete Fourier transforms, respectively,
and $\Sigma$ is an $m \times n$ matrix whose entries are all zeros
except for the diagonal entries
\begin{equation}
\Sigma_{i,i} = \delta^{\lfloor i/2 \rfloor/(k/2)}
\end{equation}
for $i = 1$,~$2$, \dots, $k$, and
\begin{equation}
\Sigma_{i,i} = \delta \cdot \frac{\min\{m,n\}-i}{\min\{m,n\}-k-1}
\end{equation}
for $i = k+1$,~$k+2$, \dots, $\min\{m,n\}$
($\lfloor i/2 \rfloor$ is the greatest integer less than or equal to $i/2$);
the tables below specify various values for $k$ and $\delta$.
Thus, the spectral norm of $A$ is 1:
\begin{equation}
\|A\|_2 = 1.
\end{equation}

\clearpage

The headings of Tables~\ref{numex}--\ref{numex3} have the following meanings:
\begin{itemize}
\item $j$ is the number of iterations conducted.
\item $k$ is the rank of the approximation constructed
--- the number of columns in $S_i$ from~(\ref{right}),
which is also the number of rows in $T_i$ from~(\ref{right}).
\item $\delta$ is the spectral-norm accuracy
of the best possible rank-$k$ approximation.
\item $\epsilon$ is the spectral-norm accuracy of the calculated rank-$k$
approximation, with the spectral-norm accuracy computed via 100 iterations
of the power method.
\item $t$ is the time in seconds required to compute the approximation
(without using any fast Fourier transforms to leverage the special structure
of the matrix $A$).
\end{itemize}

The tables illustrate the importance of using at least one (preferably two
or more) iterations, as then the accuracy ($\epsilon$) of the computed
approximation is nearly the best possible ($\delta$).
The accuracies are indeed excellent, even with just a couple iterations.
The timings scale as expected,
roughly in proportion to the number of entries in the matrices;
we used Matlab version R2015B on an Apple MacBook Pro with a 2.6~GHz
Intel Core i7 processor.

\vfill

\begin{table}[hb]
\caption{$m = 2048$, $n=4096$}
\label{numex}
\begin{center}
{\tt
\begin{tabular}{rrccc}
$j$ & $k$ & $\delta$ & $\epsilon$ & $t$ \\\hline\hline
 0 &  2 & 1e-03 & 1.4e-02 & 5.8e-01 \\
 1 &  2 & 1e-03 & 1.0e-03 & 8.1e-01 \\
 2 &  2 & 1e-03 & 1.0e-03 & 1.2e+00 \\
10 &  2 & 1e-03 & 1.0e-03 & 4.5e+00 \\
\hline
 0 & 10 & 1e-03 & 1.8e-02 & 7.5e-01 \\
 1 & 10 & 1e-03 & 1.2e-03 & 1.4e+00 \\
 2 & 10 & 1e-03 & 1.0e-03 & 2.1e+00 \\
10 & 10 & 1e-03 & 1.0e-03 & 8.0e+00 \\
\hline
 0 &  2 & 1e-11 & 1.3e-10 & 4.1e-01 \\
 1 &  2 & 1e-11 & 1.0e-11 & 7.8e-01 \\
 2 &  2 & 1e-11 & 1.0e-11 & 1.2e+00 \\
10 &  2 & 1e-11 & 1.0e-11 & 4.3e+00 \\
\hline
 0 & 10 & 1e-11 & 2.4e-10 & 7.5e-01 \\
 1 & 10 & 1e-11 & 1.0e-11 & 1.4e+00 \\
 2 & 10 & 1e-11 & 1.0e-11 & 2.1e+00 \\
10 & 10 & 1e-11 & 1.0e-11 & 8.0e+00
\end{tabular}
}
\end{center}
\end{table}

\begin{table}
\caption{$m = 4096$, $n=4096$}
\label{numex2}
\begin{center}
{\tt
\begin{tabular}{rrccc}
$j$ & $k$ & $\delta$ & $\epsilon$ & $t$ \\\hline\hline
 0 &  2 & 1e-03 & 1.5e-02 & 9.6e-01 \\
 1 &  2 & 1e-03 & 1.0e-03 & 1.9e+00 \\
 2 &  2 & 1e-03 & 1.0e-03 & 2.8e+00 \\
10 &  2 & 1e-03 & 1.0e-03 & 1.0e+01 \\
\hline
 0 & 10 & 1e-03 & 2.2e-02 & 1.8e+00 \\
 1 & 10 & 1e-03 & 1.3e-03 & 3.6e+00 \\
 2 & 10 & 1e-03 & 1.0e-03 & 5.4e+00 \\
10 & 10 & 1e-03 & 1.0e-03 & 2.1e+01 \\
\hline
 0 &  2 & 1e-11 & 2.6e-10 & 9.6e-01 \\
 1 &  2 & 1e-11 & 1.0e-11 & 1.9e+00 \\
 2 &  2 & 1e-11 & 1.0e-11 & 2.9e+00 \\
10 &  2 & 1e-11 & 1.0e-11 & 1.0e+01 \\
\hline
 0 & 10 & 1e-11 & 4.2e-10 & 1.8e+00 \\
 1 & 10 & 1e-11 & 1.0e-11 & 3.6e+00 \\
 2 & 10 & 1e-11 & 1.0e-11 & 5.5e+00 \\
10 & 10 & 1e-11 & 1.0e-11 & 2.0e+01
\end{tabular}
}
\end{center}
\end{table}

\begin{table}
\caption{$m = 4096$, $n=8192$}
\label{numex3}
\begin{center}
{\tt
\begin{tabular}{rrccc}
$j$ & $k$ & $\delta$ & $\epsilon$ & $t$ \\\hline\hline
 0 &  2 & 1e-03 & 1.2e-02 & 2.0e+00 \\
 1 &  2 & 1e-03 & 1.0e-03 & 3.7e+00 \\
 2 &  2 & 1e-03 & 1.0e-03 & 5.7e+00 \\
10 &  2 & 1e-03 & 1.0e-03 & 2.0e+01 \\
\hline
 0 & 10 & 1e-03 & 2.1e-02 & 3.8e+00 \\
 1 & 10 & 1e-03 & 1.4e-03 & 7.5e+00 \\
 2 & 10 & 1e-03 & 1.0e-03 & 1.1e+01 \\
10 & 10 & 1e-03 & 1.0e-03 & 4.1e+01 \\
\hline
 0 &  2 & 1e-11 & 1.5e-10 & 1.8e+00 \\
 1 &  2 & 1e-11 & 1.0e-11 & 3.8e+00 \\
 2 &  2 & 1e-11 & 1.0e-11 & 5.6e+00 \\
10 &  2 & 1e-11 & 1.0e-11 & 2.1e+01 \\
\hline
 0 & 10 & 1e-11 & 5.3e-10 & 3.6e+00 \\
 1 & 10 & 1e-11 & 1.0e-11 & 7.1e+00 \\
 2 & 10 & 1e-11 & 1.0e-11 & 1.1e+01 \\
10 & 10 & 1e-11 & 1.0e-11 & 4.0e+01
\end{tabular}
}
\end{center}
\end{table}

\clearpage

\appendix
\section{Common minimizers for the spectral \& Frobenius norms}

This appendix reviews the fact that, given matrices $A$ and $S$,
one matrix $T$ minimizing the norm
\begin{equation}
\label{minobj}
\| ST-A \|,
\end{equation}
with the norm being the spectral norm or the Frobenius norm, is
\begin{equation}
\label{sol}
T = S^{(-1)} A,
\end{equation}
where the so-called ``pseudoinverse'' of $S$ is
\begin{equation}
S^{(-1)} = (S^* S)^{-1} S^*,
\end{equation}
with the inverse and pseudoinverse defined in Section~\ref{notation}.

Indeed, for the spectral norm, for any $T$ --- not just that in~(\ref{sol}),
\begin{multline}
\label{unitary}
\| ST-A \|_2^2 = \left\| (ST-A)^* (ST-A) \right\|_2 \\
= \left\| \left( \left(
\begin{array}{c} SS^{(-1)} \\\hline I-SS^{(-1)} \end{array}
\right) (ST-A) \right)^*
\left( \begin{array}{c} SS^{(-1)} \\\hline I-SS^{(-1)} \end{array}
\right) (ST-A) \right\|_2 \\
= \left\| \left( \begin{array}{c} SS^{(-1)} \\\hline I-SS^{(-1)} \end{array}
\right) (ST-A) \right\|_2^2
= \left\| \left(
\begin{array}{c} ST-SS^{(-1)}A \\\hline SS^{(-1)}A-A \end{array}
\right) \right\|_2^2.
\end{multline}
The definition of the spectral norm in~(\ref{specdef}) yields
\begin{equation}
\label{square}
\| SS^{(-1)}A-A \|_2^2 \le \left\| \left(
\begin{array}{c} ST-SS^{(-1)}A \\\hline SS^{(-1)}A-A \end{array}
\right) \right\|_2^2
\le \| ST-SS^{(-1)}A \|_2^2 + \| SS^{(-1)}A-A \|_2^2.
\end{equation}
Combining~(\ref{unitary}) and~(\ref{square}) yields
\begin{equation}
\| SS^{(-1)}A-A \|_2 \le \| ST-A \|_2
\le \sqrt{\| ST-SS^{(-1)}A \|_2^2 + \| SS^{(-1)}A-A \|_2^2},
\end{equation}
so that~(\ref{minobj}) is minimal for the spectral norm when
\begin{equation}
\label{required}
ST = SS^{(-1)}A.
\end{equation}

For the Frobenius norm, for any $T$,
\begin{equation}
\label{sum}
\|ST-A\|_F^2 = \sum_{k=1}^n \|St^{[k]}-a^{[k]}\|_2^2,
\end{equation}
where $t^{[1]}$,~$t^{[2]}$, \dots, $t^{[n]}$ are the columns of $T$,
and $a^{[1]}$,~$a^{[2]}$, \dots, $a^{[n]}$ are the columns of $A$.
Using the above result for the spectral norm with $t^{[k]}$ replacing $T$
and with $a^{[k]}$ replacing $A$, the right-hand side of~(\ref{sum}) is minimal
when
\begin{equation}
St^{[k]} = SS^{(-1)}a^{[k]}
\end{equation}
for $k = 1$,~$2$, \dots, $n$, which happens to be equivalent
to~(\ref{required}) for the full $A$ for all its columns simultaneously.

Thus, for both the spectral and Frobenius norms,
(\ref{minobj}) is minimal when~(\ref{required}) holds,
and~(\ref{required}) clearly holds for $T$ defined in~(\ref{sol}).

[A similar argument uses the identity
\begin{multline}
\label{identity}
(ST-A)^* (ST-A) - \left( (I-SS^{(-1)})A \right)^* (I-SS^{(-1)})A \\
= (ST-A)^* (ST-A) - A^*(I-SS^{(-1)})A = (ST-A)^* SS^{(-1)} (ST-A), \\
= \left( S^* (ST-A) \right)^* (S^* S)^{-1} \left( S^* (ST-A) \right),
\end{multline}
the fact that the right-hand side of~(\ref{identity}) is nonnegative definite,
and the relations
\begin{equation}
\|ST-A\|_2^2 = \max_{v\;:\;v^* v = 1} v^* (ST-A)^* (ST-A) v,
\end{equation} 
\begin{equation}
\|(I-SS^{(-1)})A\|_2^2
= \max_{v\;:\;v^* v = 1} v^* \left( (I-SS^{(-1)})A \right)^* (I-SS^{(-1)})A v,
\end{equation} 
\begin{equation}
\|ST-A\|_F^2 = \sum_{k=1}^n (e^{[k]})^* (ST-A)^* (ST-A) e^{[k]},
\end{equation}
and
\begin{equation}
\|(I-SS^{(-1)})A\|_F^2 = \sum_{k=1}^n
(e^{[k]})^* \left( (I-SS^{(-1)})A \right)^* (I-SS^{(-1)})A e^{[k]},
\end{equation} 
where $e^{[1]}$,~$e^{[2]}$, \dots, $e^{[n]}$ are the unit basis vectors,
with $e^{[k]}$ being the column vector of all zeros,
except for its $k$th entry, which is 1.]

\bibliography{arxiv}
\bibliographystyle{siam}

\end{document}